\newtheorem{thm}{Theorem}[section]
\newtheorem{lem}[thm]{Lemma}
\newtheorem{prop}[thm]{Proposition}
\theoremstyle{definition}
\newtheorem{defn}[thm]{Definition}
\theoremstyle{remark}
\newtheorem{rem}{Remark}[section]
\numberwithin{equation}{section}
\newcommand{\F}{\mathbb{F}}
\newcommand{\R}{\mathbb{R}}
\begin{document}

\title{Designs over finite fields by difference methods}

\author{Marco Buratti \thanks{Dipartimento di Matematica e Informatica, Universit\`a di Perugia, via Vanvitelli 1 - 06123 Italy, email: buratti@dmi.unipg.it}
\quad\quad
Anamari Naki\'c \thanks{University of Zagreb, Faculty of Electrical Engineering and Computing, Unska 3, 10000 Zagreb, Croatia, email: anamari.nakic@fer.hr}}

\maketitle
\begin{abstract}
\noindent
One of the very first results about designs over finite fields, by S.
Thomas, is the existence of a cyclic 2-$(n,3,7)$ design over
$\mathbb{F}_{2}$ for every integer $n$ coprime with 6. Here, by means
of difference methods, we reprove and improve a little bit this result
showing that it is true, more generally, for every odd $n$. In this way,
we also find the first infinite family of non-trivial cyclic group
divisible designs over $\mathbb{F}_{2}$.
\end{abstract}

\noindent {\bf Keywords:} difference family; design over a finite field; group divisible design over a finite field.

\noindent {\bf Mathematics Subject Classification (2010):} 05B05, 05B10. 
\eject
\section{Introduction}

In this paper we adapt very well known difference methods to the
construction of \emph{designs over finite fields}. Our main result will
be the existence of a cyclic 2-$(n,3,7)$ design over $\mathbb{F}_{2}$
for every odd positive $n$. It should be noted that in the case
$n\equiv \pm 1$ (mod 6) our designs are the same found by S. Thomas
\cite{T} a long time ago by means of a geometric approach. Anyway our
proof is algebraic and completely different; we hope it may open the
door to new ideas on this topic. In the new case $n\equiv 3$ (mod 6) we
get designs which, maybe, are not very nice since they are far from
being simple; indeed they have ${2^{n}-1\over 7}$ blocks repeated 7
times. On the other hand, though ``ugly'', these designs allow us to get
the first infinite class of cyclic and simple \emph{group divisible
	designs over finite fields}.

Here we give all prerequisites that are necessary for understanding our
proof of the main result.

\bigskip
$\underline{\mbox{\emph{Classic $2$-designs and group divisible designs}}}$

\medskip
A 2-$(n,k,\lambda )$ design is a pair $(\mathcal{P},\mathcal{B})$ with
$\mathcal{P}$ a set of $n$ \emph{points} and $\mathcal{B}$ a multiset
of $k$-subsets (\emph{blocks}) of $\mathcal{P}$ with the property that
any 2-subset of $\mathcal{P}$ is contained in precisely $\lambda $
blocks.

A $(mg,g,k,\lambda )$ \emph{group divisible design}, briefly a
$(mg,g,k,\lambda )$-GDD, is a triple $(\mathcal{P},\mathcal{G},
\mathcal{B})$ with $\mathcal{P}$ a set of $mg$ \emph{points},
$\mathcal{G}$ a partition of $\mathcal{P}$ into $m$ subsets
(\emph{groops})\footnote{Here, following \cite{BJL}, we misspell the
	word ``group'' on purpose in order to avoid confusion with the groups
	understood as algebraic structures.} of size $g$, and $\mathcal{B}$ a
multiset of $k$-subsets (\emph{blocks}) of $\mathcal{P}$ with the two
properties that a block and a groop have at most one common point, and
any two points belonging to distinct groops are contained, together, in
exactly $\lambda $ blocks.

It is clear that a $(n,1,k,\lambda )$-GDD is completely equivalent to
a 2-$(n,k,\lambda )$ design.

An automorphism of a 2-design or group divisible design is a permutation
of its point-set leaving invariant its block-multiset.

A 2-design or group divisible design is said to be \emph{simple} if it
does not have repeated blocks.

\bigskip
$\underline{\mbox{\emph{Cyclic $2$-designs and difference families}}}$

\medskip
A 2-design is said to be \emph{cyclic} if it admits an automorphism
cyclically permuting all its points or, equivalently, if it has a cyclic
automorphism group acting sharply transitively on the points. It is
known that every cyclic 2-design can be described in terms of
differences \cite{AB}. We recall here the difference methods using the
notion of an \emph{ordinary difference family}.

If $B$ is a subset of an additive (resp. multiplicative) group $H$, the
list of differences of $B$ is the multiset $\Delta B$ of all possible
differences $x-y$ (resp. quotients $xy^{-1}$) with $(x,y)$ an ordered
pair of distinct elements of $B$. The \emph{development} of $B$ under
$H$ is the collection $\mathrm{dev} B=\{B*h\;|\;h\in H\}$ where $*$ is
the (additive or multiplicative) operation of $H$.

Note that if stab$(B)$ is the stabilizer of $B$ under the regular right
action of $H$ on itself, then dev$B$ coincides with the orbit of $B$
replicated $|H:\mathrm{stab}(B)|$ times. So dev$B$ coincides with the
orbit of $B$ when stab$(B)$ is trivial.

If $\mathcal{F}$ is a collection of subsets of $H$, then the list of
differences and the development of $\mathcal{F}$ are, respectively, the
multiset sums
\begin{equation*}
	\Delta {\mathcal{F}}:=\biguplus _{B\in {\mathcal{F}}}\Delta B
	\quad
	\quad \mathrm{and}
	\quad \quad
	\mathrm{dev}{\mathcal{F}}:=\biguplus _{B\in {\mathcal{F}}}{\mathrm{dev}}
	B.
\end{equation*}

\begin{defn}
	Let $H$ be a group of order $n$. A collection $\mathcal{F}$ of
	$k$-subsets of $H$ is an ordinary $(n,k,\lambda )$ difference family if
	the list of differences of $\mathcal{F}$ covers every non-identity
	element of $H$ exactly $\lambda $ times.
\end{defn}

In the following, the adjective ``ordinary'' will be omitted. The members
of a difference family are usually called \emph{base blocks}. Sometimes,
as in \cite{BJL}, it is also required that the base blocks have
trivial stabilizers. We prefer to remove this constraint since it is not
necessary for the validity of the following well known result.

\begin{thm}
	\label{DF->design}
	If $\mathcal{F}$ is a $(n,k,\lambda )$ difference family in a group
	$H$, then the pair $(H,dev\mathcal{F})$ is a $2$-$(n,k,\lambda )$ design
	admitting an automorphism group isomorphic to $H$ acting sharply
	transitively on the points.
\end{thm}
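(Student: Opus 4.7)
The plan is to take the point set to be $H$ itself (so there are $n$ points) and to verify directly that each block has size $k$ and that every pair of distinct points is covered exactly $\lambda$ times; the automorphism statement will then follow from the obvious regular action of $H$ on $H$.

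Concretely, I would first observe that for every $B\in\mathcal{F}$ and every $h\in H$, the right translate $B*h$ is a $k$-subset of $H$ because right multiplication by $h$ is a bijection of $H$; thus every block of $\mathrm{dev}\mathcal{F}$ has the required size $k$. Note that, as pointed out in the excerpt, $\mathrm{dev} B$ is to be read as a multiset: if $\mathrm{stab}(B)$ is non-trivial, each element of the orbit appears $|\mathrm{stab}(B)|$ times, so that the total multiset $\mathrm{dev}\mathcal{F}$ has exactly $n|\mathcal{F}|$ blocks counted with multiplicity.

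The key step is the counting of pairs. Fix distinct $x,y\in H$ and set $d:=xy^{-1}$ (or $x-y$ in additive notation). A block $B*h$ contains both $x$ and $y$ if and only if there exist distinct $b_1,b_2\in B$ with $b_1*h=x$ and $b_2*h=y$; eliminating $h$ gives $b_1 b_2^{-1}=d$. Thus, as $(B,h)$ ranges over $\mathcal{F}\times H$, the number of blocks $B*h$ containing both $x$ and $y$ equals the number of ordered pairs $(b_1,b_2)$ of distinct elements of some $B\in\mathcal{F}$ with $b_1 b_2^{-1}=d$, which is exactly the multiplicity of $d$ in $\Delta\mathcal{F}$. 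By the definition of a difference family this multiplicity is $\lambda$, and the $2$-design property is established.

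Finally, for each $g\in H$ the right translation $\rho_g\colon x\mapsto x*g$ is a permutation of the point set $H$ which sends the block $B*h$ to the block $B*(hg)\in\mathrm{dev}\mathcal{F}$, and it does so preserving multiplicities since it merely reindexes the parameter $h$; hence $\rho_g$ is an automorphism. The map $g\mapsto\rho_g$ is an injective homomorphism from $H$ into the automorphism group, and the resulting action on $H$ is clearly sharply transitive. The only point requiring a bit of care is the multiset bookkeeping when some base block has non-trivial stabilizer, but this is handled uniformly by the convention $\mathrm{dev} B=\{B*h\mid h\in H\}$ adopted in the excerpt, which is exactly what makes the above pair-count come out to $\lambda$ on the nose.
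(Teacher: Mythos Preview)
Your argument is correct and is precisely the standard direct verification: translates have size $k$, the bijection between translates $B*h$ containing a fixed pair $\{x,y\}$ and ordered pairs $(b_1,b_2)\in B^2$ with $b_1b_2^{-1}=xy^{-1}$ gives the $\lambda$-count, and the right regular action of $H$ supplies the sharply transitive automorphism group. Your handling of the multiset convention for $\mathrm{dev}B$ when the stabilizer is non-trivial is exactly right and is what makes the count work without any side conditions.

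There is nothing to compare with in the paper: Theorem~\ref{DF->design} is stated there as a well-known fact with a reference to the handbook chapter~\cite{AB} and is not given a proof. Your write-up is the natural proof one would expect in that reference.
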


So, in particular, the existence of a $(n,k,\lambda )$ difference family
in a cyclic group is a sufficient condition for the existence of a
cyclic 2-$(n,k,\lambda )$ design.

\begin{rem}
	\label{simple}
	The design generated by a difference family $\mathcal{F}$ is simple if
	and only if all the base blocks of $\mathcal{F}$ have trivial stabilizer
	and they belong to pairwise distinct orbits.
\end{rem}

$\underline{\mbox{\emph{Designs and difference families over $\mathbb{F}_{2}$}}}$

\medskip
As it is standard, we denote by $\mathbb{F}_{q}^{n}$ the $n$-dimensional
vector space over the field $\mathbb{F}_{q}$ of order $q$. The
multiplicative group of a field $\mathbb{F}$ will be denoted by
$\mathbb{F}^{*}$ and the set of non-zero vectors of $\mathbb{F}_{q}
^{n}$ will be often identified with $\mathbb{F}_{q^{n}}^{*}$.

The $q$-analog of a $t$-$(n,k,\lambda )$ design -- also said a $t$-$(n,k,
\lambda )$ design over $\mathbb{F}_{q}$ or $t$-$(n,k,\lambda )_{q}$
design -- is a collection $\mathcal{S}$ of $k$-dimensional subspaces of
$\mathbb{F}_{q}^{n}$ with the property that any $t$-dimensional subspace
of $\mathbb{F}_{q}^{n}$ is contained in exactly $\lambda $ members of
$\mathcal{S}$. For the survey on recent results, we refer the reader to \cite{BKW}. The most spectacular design over a finite field, obtained
by Braun et al. \cite{BEOVW}, has parameters 2-$(13,3,1)_{2}$. Its
discovering allowed to disprove the longstanding conjecture according
to which the only Steiner $t$-designs over finite fields are the trivial
ones (spreads).

Here we are interested only in 2-$(n,k,\lambda )$ designs over
$\mathbb{F}_{2}$.

\begin{rem}
	\label{equivalence}
	Every 2-$(n,k,\lambda )$ design over $\mathbb{F}_{2}$ is completely
	equivalent to %
	a\break 2-$(2^{n}-1,2^{k}-1,\lambda )$ design $(\mathbb{F}_{2^{n}}^{*},
	\mathcal{B})$ in the classic sense with the crucial property that
	$B \ \cup \ \{0\}$ is a subspace of the vector space $\mathbb{F}_{2}
	^{n}$ for every $B\in {\mathcal{B}}$.
\end{rem}

Indeed, deleting the zero-vector from each block of a 2-$(n,k,\lambda
)_{2}$ design one gets the block-multiset of a classic 2-$(2^{n}-1,2^{k}-1,
\lambda )$ design with point-set $\mathbb{F}_{2^{n}}^{*}$.

For instance, the mentioned 2-$(13,3,1)_{2}$ design is a classic
2-$(8191,7,1)$ design where the points are the non-zero vectors of
$\mathbb{F}_{2}^{13}$ and where every block is the set of non-zero
vectors of a 3-dimensional subspace of $\mathbb{F}_{2}^{13}$. It is
cyclic since it admits $\mathbb{F}_{2^{13}}^{*}$ as an automorphism
group acting sharply transitively on the points. The authors found it
by using the famous Kramer-Mesner method and then they proved that it
could be also obtained from a $(8191,7,1)$ difference
family.\footnote{As a matter of fact, there was no need to prove this
	since it is possible to see that every cyclic $2$-$(n,k,\lambda )$
	design with $\gcd (n,k)=1$ is necessarily generated by a $(n,k,\lambda
	)$ difference family.} Of course this difference family has the special
property that all its members are subspaces of $\mathbb{F}_{2}^{13}$
with the zero-vector removed. This naturally leads to the following
definition.

\begin{defn}
	\label{DF_2}
	A $(n,k,\lambda )$ difference family over $\mathbb{F}_{2}$ or, briefly,
	a $(n,k,\lambda )_{2}$ difference family, is a $(2^{n}-1,2^{k}-1,
	\lambda )$ difference family in $\mathbb{F}_{2^{n}}^{*}$ with the
	property that $B \ \cup \ \{0\}$ is a subspace of $\mathbb{F}_{2}^{n}$
	for every $B\in {\mathcal{F}}$.
\end{defn}

The above terminology is justified by the following.

\begin{prop}
	\label{DF_2->design_2}
	A $(n,k,\lambda )_{2}$ difference family generates a cyclic $2$-$(n,k,
	\lambda )_{2}$ design.
\end{prop}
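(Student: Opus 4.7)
The plan is to reduce to Theorem \ref{DF->design} and Remark \ref{equivalence}: the first already gives us a classic $2$-design from the difference family, and the second says that any classic $2$-$(2^n-1,2^k-1,\lambda)$ design in $\mathbb{F}_{2^n}^*$ whose blocks, augmented with $0$, are subspaces of $\mathbb{F}_2^n$, is the same thing as a $2$-$(n,k,\lambda)_2$ design.

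First I would apply Theorem \ref{DF->design} directly. Since $\mathcal{F}$ is by definition a $(2^n-1,2^k-1,\lambda)$ difference family in the (cyclic) multiplicative group $\mathbb{F}_{2^n}^*$, the pair $(\mathbb{F}_{2^n}^*,\mathrm{dev}\mathcal{F})$ is a classic $2$-$(2^n-1,2^k-1,\lambda)$ design admitting $\mathbb{F}_{2^n}^*$ as a sharply transitive automorphism group.

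The key step is to check that every block of $\mathrm{dev}\mathcal{F}$, and not just every base block, satisfies the ``subspace'' condition of Remark \ref{equivalence}. A generic block has the form $B\cdot h$ with $B\in\mathcal{F}$ and $h\in\mathbb{F}_{2^n}^*$, so
\begin{equation*}
(B\cdot h)\cup\{0\} \;=\; \bigl(B\cup\{0\}\bigr)\cdot h.
\end{equation*}
By hypothesis $B\cup\{0\}$ is a $k$-dimensional $\mathbb{F}_2$-subspace of $\mathbb{F}_{2^n}$, and multiplication by any nonzero $h$ is an $\mathbb{F}_2$-linear bijection of $\mathbb{F}_{2^n}$; hence $(B\cdot h)\cup\{0\}$ is again a $k$-dimensional subspace. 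This is really the only thing to verify, and it is immediate once one recalls that scalar-style multiplication in $\mathbb{F}_{2^n}$ is $\mathbb{F}_2$-linear, so I do not expect any genuine obstacle.

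Putting the pieces together, Remark \ref{equivalence} translates $(\mathbb{F}_{2^n}^*,\mathrm{dev}\mathcal{F})$ into a $2$-$(n,k,\lambda)_2$ design. Finally, cyclicity transfers as well: the same multiplicative action of the cyclic group $\mathbb{F}_{2^n}^*$ on $\mathbb{F}_{2^n}^*$ is $\mathbb{F}_2$-linear, so it permutes the $k$-dimensional subspaces of $\mathbb{F}_2^n$ and acts sharply transitively on the $1$-dimensional ones, which are precisely the points of the design over $\mathbb{F}_2$. This gives the desired cyclic $2$-$(n,k,\lambda)_2$ design.
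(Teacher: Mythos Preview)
Your proof is correct and follows essentially the same route as the paper: apply Theorem~\ref{DF->design} to get the classic cyclic $2$-$(2^n-1,2^k-1,\lambda)$ design, observe that each developed block $hB$ satisfies $hB\cup\{0\}=h(B\cup\{0\})$ and is therefore again a subspace, and then invoke Remark~\ref{equivalence}. Your extra remark that cyclicity transfers because the multiplicative action of $\mathbb{F}_{2^n}^*$ is $\mathbb{F}_2$-linear and sharply transitive on the $1$-dimensional subspaces is a welcome clarification that the paper leaves implicit.
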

\begin{proof}
	Let $\mathcal{F}$ be a $(n,k,\lambda )_{2}$ difference family. By
	Definition~\ref{DF_2}, $\mathcal{F}$ is a $(2^{n}-1,2^{k}-1,\lambda )$
	difference family in $\mathbb{F}_{2^{n}}^{*}$ and then, by Theorem~\ref{DF->design}, the pair $\mathcal{D}=(\mathbb{F}_{2^{n}}^{*},dev
	\mathcal{F})$ is a cyclic $2$-$(2^{n}-1,2^{k}-1,\lambda )$ design. By
	definition of dev$\mathcal{F}$, each block of $\mathcal{D}$ is of the
	form $xB$ with $x\in \mathbb{F}_{2^{n}}^{*}$ and $B\in {\mathcal{F}}$.
	Also, by Definition~\ref{DF_2}, we have that $B \ \cup \ \{0\}$ is a
	subspace of the vector space $\mathbb{F}_{2}^{n}$ so that $xB \ \cup \ \{0\}$ is a subspace of $\mathbb{F}_{2}^{n}$ as well. Thus every
	block of $\mathcal{D}$ is a subspace of $\mathbb{F}_{2}^{n}$ deprived
	of the zero vector. This means, by Remark~\ref{equivalence}, that
	$\mathcal{D}$ can be seen as a $2$-$(n,k,\lambda )_{2}$ design.
\end{proof}

We will use the above proposition to reprove and improve an old result
by S. Thomas \cite{T} about cyclic 2-$(n,3,7)$ designs over
$\mathbb{F}_{2}$.

\bigskip
\underline{\emph{Cyclic group divisible designs and relative difference
		families}}

\medskip
Cyclic group divisible designs -- namely group divisible designs with
an automorphism group acting sharply transitively on the point-set --
can be also described in terms of differences. In particular, some of
them are generated by \emph{relative difference families}.

\begin{defn}
	\label{RDF}
	Let $G$ be a subgroup of order $g$ of a group $H$ of order $mg$. A
	collection $\mathcal{F}$ of $k$-subsets of $H$ is a $(mg,g,k,\lambda
	)$ difference family if the list of differences of $\mathcal{F}$ does
	not contain any element of $G$ and covers every element of $H\setminus
	G$ exactly $\lambda $ times.
\end{defn}

One usually says that a difference family $\mathcal{F}$ as above is
\emph{relative to $G$}. It is clear that an ordinary difference family
in $H$ can be seen as a difference family relative to the trivial
subgroup of $H$. More specifically, a $(v,k,\lambda )$ difference family
in $H$ is nothing but a $(v,1,k,\lambda )$ difference family.

Here is the ``group-divisible-analog'' of Theorem~\ref{DF->design}
\cite{B}.
%
\begin{thm}
	\label{RDF->GDD}
	Let $\mathcal{F}$ be a $(mg,g,k,\lambda )$ difference family in $H$
	relative to $G$. Then, if $\mathcal{G}$ is the set of right cosets of
	$G$ in $H$, the triple $(H,\mathcal{G},dev\mathcal{F})$ is a
	$(mg,g,k,\lambda )$-GDD with an automorphism group isomorphic to $H$
	acting sharply transitively on the points.
\end{thm}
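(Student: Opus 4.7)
The plan is to translate the defining property of a relative difference family directly into the two combinatorial axioms of a GDD, working with the right-regular action of $H$ on itself. Throughout, I treat $H$ as the point-set, the $m$ right cosets of $G$ as the groops, and $\mathrm{dev}\,\mathcal{F}$ as the block-multiset; the right-regular action of $H$ will visibly preserve all three.

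First I would verify the ``at most one common point'' axiom. A block of $\mathrm{dev}\,\mathcal{F}$ has the form $Bh$ with $B\in\mathcal{F}$ and $h\in H$, and a groop has the form $Gh'$. Suppose two distinct elements $xh,yh\in Bh$ both lie in $Gh'$; then $xh(yh)^{-1}=xy^{-1}$ belongs to $G$. But $xy^{-1}\in\Delta B\subseteq\Delta\mathcal{F}$, and by Definition~\ref{RDF} the list $\Delta\mathcal{F}$ contains no element of $G$, a contradiction.

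Next I would establish the covering condition. Take two points $p,q\in H$ lying in distinct groops, so that $pq^{-1}\in H\setminus G$. I want a bijection between the occurrences of $pq^{-1}$ in $\Delta\mathcal{F}$ and the blocks of $\mathrm{dev}\,\mathcal{F}$ containing both $p$ and $q$. To an occurrence coming from an ordered pair $(x,y)$ of distinct elements of some $B\in\mathcal{F}$ with $xy^{-1}=pq^{-1}$, I associate $h:=y^{-1}q$; then $yh=q$ and, using $xy^{-1}=pq^{-1}$, also $xh=p$, so $p,q\in Bh$. Conversely, any block $Bh$ containing both $p$ and $q$ gives distinct $x,y\in B$ with $xh=p$, $yh=q$, hence $xy^{-1}=pq^{-1}$, recovering the pair. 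This correspondence is manifestly inverse to itself, and since the relative difference family condition forces $pq^{-1}$ to appear exactly $\lambda$ times in $\Delta\mathcal{F}$, exactly $\lambda$ blocks contain the pair $\{p,q\}$.

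Finally I would argue the automorphism statement: the right-regular action $\rho_a\colon x\mapsto xa$ permutes $H$, sends $Gh'$ to $G(h'a)$ so it permutes the groops, and sends $Bh$ to $B(ha)$ so it permutes $\mathrm{dev}\,\mathcal{F}$ as a multiset. Hence $H$ embeds into the automorphism group and acts sharply transitively on the points. The only non-routine step is the bijection in the covering argument, and the only subtle point there is to check that the block $Bh$ produced is literally an element of the multiset $\mathrm{dev}\,\mathcal{F}$ with the correct multiplicity; this is automatic because $\mathrm{dev}\,\mathcal{F}$ is indexed by all of $H$, so every translate $Bh$ is counted once per choice of $h$, matching the parametrization of ordered pairs by the element $h=y^{-1}q$.
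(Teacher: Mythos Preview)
The paper does not actually prove this theorem; it merely states it and cites \cite{B} for the proof. Your argument is the standard one and is correct: the bijection you set up between occurrences of $pq^{-1}$ in $\Delta\mathcal{F}$ and block-instances $(B,h)$ of $\mathrm{dev}\,\mathcal{F}$ containing $\{p,q\}$ is exactly the right way to read off the $\lambda$-covering from the difference condition, and your handling of the multiset bookkeeping (indexing $\mathrm{dev}\,B$ by all of $H$ rather than by the orbit) is what makes the count honest even when base blocks have non-trivial stabilizer.
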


So, in particular, the existence of a $(mg,g,k,\lambda )$ difference
family in a cyclic group is a sufficient condition for the existence of
a cyclic $(mg,g,k,\lambda )$ group divisible design.

The GDD generated by a relative difference family $\mathcal{F}$ is
simple if and only if all the base blocks of $\mathcal{F}$ have trivial
stabilizer and they belong to pairwise distinct orbits.

We will need the following very elementary fact.

\begin{prop}
	\label{mk,k,k}
	Let $\mathcal{F}$ be a $(mk,k,k)$ difference family in $H$ with a base
	block $G$ that is a subgroup of $H$. Then $\mathcal{F}\setminus \{G\}$
	is a $(mk,k,k,k)$ difference family in $H$ relative to $G$.
\end{prop}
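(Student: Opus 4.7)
The plan is to verify the two conditions of Definition~\ref{RDF} directly by a multiset bookkeeping argument, comparing the list of differences of $\mathcal{F}$ with that of the single block $G$. Note first that since the base blocks have size $k$ and $G$ is one of them, $G$ is automatically a subgroup of order exactly $k$, matching the ``$g=k$'' in the target parameters $(mk,k,k,k)$.

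The first step is to describe $\Delta G$ exactly. Since $G$ is a subgroup of $H$, for any fixed non-identity element $a\in G$ the map $y\mapsto (a+y,y)$ (writing $H$ additively for convenience) is a bijection from $G$ onto the set of ordered pairs of distinct elements of $G$ whose difference is $a$. Hence $\Delta G$ contains every non-identity element of $G$ exactly $k$ times, and contains no element of $H\setminus G$ at all (closure of $G$ under subtraction).

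The second step is to subtract $\Delta G$ from $\Delta \mathcal{F}$. By the definition of a $(mk,k,k)$ difference family, $\Delta \mathcal{F}$ covers every non-identity element of $H$ exactly $k$ times. Since $\Delta(\mathcal{F}\setminus\{G\})$ is obtained from $\Delta \mathcal{F}$ by removing $\Delta G$ as a multiset, every non-identity element of $G$ gets cancelled (multiplicity $k-k=0$), while every element of $H\setminus G$ retains multiplicity $k-0=k$. This is exactly what Definition~\ref{RDF} requires for $\mathcal{F}\setminus\{G\}$ to be a $(mk,k,k,k)$ difference family in $H$ relative to $G$.

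There is really no substantive obstacle here: the argument is pure multiset arithmetic, and the only mildly non-trivial ingredient is the standard computation of $\Delta G$ for a subgroup $G$, which the author already hints at by calling the proposition ``very elementary''.
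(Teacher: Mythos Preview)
Your proof is correct and follows exactly the same idea as the paper's own one-line argument, namely that $\Delta G$ is $k$ times the set of non-identity elements of $G$; you have simply spelled out the multiset subtraction in full detail.
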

\begin{proof}
	It is enough to note that $\Delta G$ is $k$ times the set of
	non-identity elements of $G$.
\end{proof}

$\underline{\mbox{\emph{Group divisible designs and relative difference
			families over $\mathbb{F}_{2}$}}}$

\medskip
The $q$-analog of a group divisible design is a concept very recently
introduced in \cite{BKKNW}. First recall that a \emph{$g$-spread} of
the vector space $\mathbb{F}_{q}^{n}$ is a set of $g$-dimensional
subspaces covering $\mathbb{F}_{q}^{n}$ and intersecting each other
trivially.

\begin{defn}
	Let $\mathcal{S}$ be a $g$-spread of $\mathbb{F}_{q}^{n}$ and let
	$\mathcal{T}$ be a collection of $k$-dimensional subspaces of
	$\mathbb{F}_{q}^{n}$. The triple $(\mathbb{F}_{q}^{n},\mathcal{S},
	\mathcal{T})$ is a $(n,g,k,\lambda )$ group divisible design over
	$\mathbb{F}_{q}$, briefly a $(n,g,k,\lambda )_{q}$-GDD, if any
	$2$-dimensional subspace of $\mathbb{F}_{q}^{n}$ is either contained in
	exactly one member of $\mathcal{S}$ or contained in exactly
	$\lambda $ members of $\mathcal{T}$ but not both.
\end{defn}

Note that when $g=1$, then $\mathcal{S}$ is necessarily the set of all
1-dimensional subspaces of $\mathbb{F}_{q}^{n}$ and $\mathcal{T}$ is a
2-$(n,k,\lambda )_{2}$ design.

\begin{rem}
	\label{equivalenceGDD}
	Every $(mg,g,k,\lambda )$ design over $\mathbb{F}_{2}$ is completely
	equivalent to a $(2^{mg}-1,2^{g}-1,2^{k}-1,\lambda )$-GDD with point-set
	$\mathbb{F}_{2^{mg}}^{*}$ and the properties that the groops are the
	elements -- with the zero-vector removed -- of a $g$-spread, and that
	each block is the set of non-zero vectors of a $k$-dimensional subspace.
\end{rem}

Indeed, deleting the zero-vector from each groop and from each block of
a\break $(mg,g,k,\lambda )_{2}$-GDD one get a classic $(2^{mg}-1,2^{g}-1,2^{k}-1,
\lambda )$-GDD.

\begin{defn}
	\label{RDF_2}
	A $(mg,g,k,\lambda )_{2}$ difference family over $\mathbb{F}_{2}$,
	briefly a $(mg,g,k,\lambda )_{2}$ difference family, is a $(2^{mg}-1,2^{g}-1,2^{k}-1,
	\lambda )$ difference family in $\mathbb{F}_{2^{mg}}^{*}$ with the
	property that $B \ \cup \ \{0\}$ is a subspace of $\mathbb{F}_{2}^{mg}$
	for every $B\in {\mathcal{F}}$.
\end{defn}

The above terminology is justified by the following result.

\begin{prop}
	\label{RDF_2->GDD_2}
	Every $(mg,g,k,\lambda )_{2}$ difference family generates a cyclic
	$(mg,g,k,\lambda )_{2}$-GDD.
\end{prop}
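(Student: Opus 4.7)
The plan is to mimic closely the proof of Proposition~\ref{DF_2->design_2}, now invoking Theorem~\ref{RDF->GDD} and Remark~\ref{equivalenceGDD} in place of Theorem~\ref{DF->design} and Remark~\ref{equivalence}. The three tasks are: pin down the right relative subgroup, identify its cosets with a $g$-spread, and recognise each translate of a base block as a $k$-dimensional subspace with the zero vector removed.

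First I would set $H=\mathbb{F}_{2^{mg}}^{*}$ and take $G$ to be its unique subgroup of order $2^{g}-1$, which coincides with $\mathbb{F}_{2^{g}}^{*}$, the multiplicative group of the subfield of order $2^{g}$ contained in $\mathbb{F}_{2^{mg}}$. By Definition~\ref{RDF_2}, $\mathcal{F}$ is a $(2^{mg}-1,2^{g}-1,2^{k}-1,\lambda)$ difference family in $H$; since in a cyclic group there is a unique subgroup of each admissible order, $\mathcal{F}$ is automatically a difference family relative to $G$ in the sense of Definition~\ref{RDF}. Feeding this into Theorem~\ref{RDF->GDD} produces a classic $(2^{mg}-1,2^{g}-1,2^{k}-1,\lambda)$-GDD $(H,\mathcal{G},\mathrm{dev}\,\mathcal{F})$, in which $\mathcal{G}$ is the set of right cosets of $G$ in $H$ and $H$ acts sharply transitively on the points.

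The cosets $xG$ with $x\in H$ are exactly the sets $x\mathbb{F}_{2^{g}}\setminus\{0\}$, so the groops are the non-zero vectors of the members of the Desarguesian $g$-spread $\{x\mathbb{F}_{2^{g}}\mid x\in\mathbb{F}_{2^{mg}}^{*}\}$ of $\mathbb{F}_{2}^{mg}$, matching the groop-condition of Remark~\ref{equivalenceGDD}. For the blocks, each element of $\mathrm{dev}\,\mathcal{F}$ has the form $xB$ with $x\in H$ and $B\in\mathcal{F}$; since $B\cup\{0\}$ is a $k$-dimensional $\mathbb{F}_{2}$-subspace by Definition~\ref{RDF_2}, so is $xB\cup\{0\}=x(B\cup\{0\})$, hence every block is the set of non-zero vectors of a $k$-dimensional subspace. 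Applying Remark~\ref{equivalenceGDD} then converts this classic GDD into a $(mg,g,k,\lambda)_{2}$-GDD, and the sharply transitive action of $H$ makes it cyclic. I do not anticipate any serious obstacle: the only point needing care is the observation that the parameters force the relative subgroup to be $\mathbb{F}_{2^{g}}^{*}$ and that its cosets already constitute a Desarguesian spread, both of which are standard.
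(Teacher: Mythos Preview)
Your proposal is correct and follows essentially the same route as the paper: apply Theorem~\ref{RDF->GDD} to obtain the classic GDD, identify the relative subgroup with $\mathbb{F}_{2^{g}}^{*}$ so that its cosets give the Desarguesian $g$-spread, observe that each $xB\cup\{0\}$ is a subspace, and conclude via Remark~\ref{equivalenceGDD}. Your explicit use of the uniqueness of subgroups in a cyclic group to pin down $G$ is a nice touch that the paper leaves implicit.
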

\begin{proof}
	Let $\mathcal{F}$ be a $(mg,g,k,\lambda )_{2}$ difference family. So,
	by definition, $\mathcal{F}$ is a $(2^{mg}-1,2^{g}-1,2^{k}-1,\lambda
	)$ difference family in $\mathbb{F}_{2^{mg}}^{*}$. Let $G$ be the
	subgroup of $\mathbb{F}_{2^{mg}}^{*}$ not covered by the list of
	differences of $\mathcal{F}$ and let $\mathcal{G}$ be the set of cosets
	of $G$ in $\mathbb{F}_{2^{mg}}^{*}$. Then, by Theorem~\ref{RDF->GDD},
	the triple $\mathcal{D}=(\mathbb{F}_{q}^{n},\mathcal{G},dev
	\mathcal{F})$ is a cyclic $(2^{mg}-1,2^{g}-1,2^{k}-1,\lambda )$-GDD. Now
	note that $G$ is the multiplicative group of the subfield of order
	$2^{g}$ of $\mathbb{F}_{q}^{mg}$. Hence, adding 0 to each member of
	$\mathcal{G}$ we get the so-called \emph{regular} or
	\emph{Desarguesian} $g$-spread. Also, each block of $dev\mathcal{F}$ is
	of the form $xB$ with $x\in \mathbb{F}_{2^{n}}^{*}$ and
	$B\in {\mathcal{F}}$. On the other hand, by Definition~\ref{RDF_2}, we
	have that $B \ \cup \ \{0\}$ is a subspace of $\mathbb{F}_{2^{n}}$ so
	that $xB \ \cup \ \{0\}$ is a subspace of $\mathbb{F}_{2^{n}}$ as well.
	Thus every block of $\mathcal{D}$ is a subspace of $\mathbb{F}_{2}
	^{n}$ deprived of the zero vector. We conclude that $\mathcal{D}$ can
	be seen as a $(mg,g,k,\lambda )_{2}$ design by Remark~\ref{equivalenceGDD}.
\end{proof}

The above proposition will allow us to get a cyclic $(n,3,3,7)_{2}$-GDD
for every $n\equiv 3$ (mod 6).

\section{Revisiting and improving Thomas' result on 2-$(n,3,7)$ designs over $\mathbb{F}_{2}$}

Here we obtain a $(n,3,7)_{2}$ difference family for any positive odd
integer $n$. Thus, in view of Proposition~\ref{DF_2->design_2}, we prove
the following.
%
\begin{thm}
	\label{Thomas+}
	There exists a cyclic $2$-$(n,3,7)$ design over $\mathbb{F}_{2}$ for
	every odd positive integer~$n$.
\end{thm}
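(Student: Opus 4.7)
The plan is to invoke Proposition~\ref{DF_2->design_2} and construct, for every odd $n$, a $(n,3,7)_{2}$ difference family $\mathcal{F}$ in $\mathbb{F}_{2^{n}}^{*}$. Since each base block contributes $7\cdot 6=42$ differences and the target multiset has $7\cdot(2^{n}-2)$ elements, a simple count forces $|\mathcal{F}|=(2^{n-1}-1)/3$, which is an integer precisely when $n$ is odd. Each base block must have the form $V\setminus\{0\}$ for a $3$-dimensional $\mathbb{F}_{2}$-subspace $V\subseteq\mathbb{F}_{2^{n}}$, and the combined quotient multiset must equal $7\cdot(\mathbb{F}_{2^{n}}^{*}\setminus\{1\})$.

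I would split the construction according to the residue of $n$ modulo $6$. When $\gcd(n,6)=1$, the subfield $\mathbb{F}_{8}$ is not contained in $\mathbb{F}_{2^{n}}$, so every $3$-dimensional $\mathbb{F}_{2}$-subspace of $\mathbb{F}_{2^{n}}$ has trivial multiplicative stabilizer and generates a full $\mathbb{F}_{2^{n}}^{*}$-orbit of length $2^{n}-1$. In this case I would select an explicit system of $(2^{n-1}-1)/3$ orbit representatives --- written, say, as spans $\langle 1,\alpha^{a},\alpha^{b}\rangle$ for a primitive element $\alpha\in\mathbb{F}_{2^{n}}^{*}$ and a carefully chosen set of exponent pairs $(a,b)$ --- whose combined quotient multiset covers $\mathbb{F}_{2^{n}}^{*}\setminus\{1\}$ exactly $7$ times. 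When $n\equiv 3\pmod{6}$, the subfield $\mathbb{F}_{8}\subset\mathbb{F}_{2^{n}}$ is itself a $3$-dimensional $\mathbb{F}_{2}$-subspace with multiplicative stabilizer $\mathbb{F}_{8}^{*}$, and one checks directly that $\Delta(\mathbb{F}_{8}^{*})=7\cdot(\mathbb{F}_{8}^{*}\setminus\{1\})$. Here I would take $B_{0}=\mathbb{F}_{8}^{*}$ as one base block and, by Proposition~\ref{mk,k,k}, reduce the remaining task to constructing a $(n,3,3,7)_{2}$ relative difference family with respect to $\mathbb{F}_{8}^{*}$ --- namely, $(2^{n-1}-1)/3-1$ further $3$-dimensional subspaces whose combined quotient multiset equals $7\cdot(\mathbb{F}_{2^{n}}^{*}\setminus\mathbb{F}_{8}^{*})$. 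Each such subspace is then automatically forced to meet every multiplicative coset of $\mathbb{F}_{8}^{*}$ in at most one point, which also explains the appearance of the $(n,3,3,7)_{2}$-GDD announced in the introduction.

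The main obstacle in either case is the explicit exhibition of the base blocks; once a candidate family is written down, verifying the difference-covering property is a direct (if delicate) computation. To organize it I would exploit the Frobenius automorphism $x\mapsto x^{2}$, which permutes $3$-dimensional $\mathbb{F}_{2}$-subspaces of $\mathbb{F}_{2^{n}}$ and acts on quotient multisets by squaring, thereby reducing the number of independent identities to be checked to one per Frobenius-orbit of non-identity elements of $\mathbb{F}_{2^{n}}^{*}$. A unifying feature of both cases is that the total number of base blocks agrees with the number of $\mathbb{F}_{2^{n}}^{*}$-orbits on $2$-dimensional subspaces of $\mathbb{F}_{2^{n}}$, a coincidence that will guide the choice of the explicit family.
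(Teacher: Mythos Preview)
Your proposal correctly identifies the strategy --- invoke Proposition~\ref{DF_2->design_2} after constructing a $(n,3,7)_{2}$ difference family --- and your counting of base blocks is right. But the proposal is not a proof: it is a description of the \emph{shape} a proof would have, with the central step left blank. You write that you ``would select an explicit system of $(2^{n-1}-1)/3$ orbit representatives \dots\ for a carefully chosen set of exponent pairs $(a,b)$,'' and you acknowledge that ``the main obstacle in either case is the explicit exhibition of the base blocks.'' That obstacle is the entire content of the theorem. Without a concrete, uniform (in $n$) specification of the blocks \emph{and} a uniform argument that their quotient multiset equals $7\cdot(\mathbb{F}_{2^{n}}^{*}\setminus\{1\})$, nothing has been proved. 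A ``direct (if delicate) computation'' cannot be performed once and for all unless the family is given by a formula valid for every odd $n$; Frobenius symmetry reduces bookkeeping but does not produce the blocks.

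What the paper actually does is supply exactly the missing ingredient. The base blocks are taken uniformly as $B_{x}=\langle 1,x,x^{2}\rangle\setminus\{0\}$ for $x\in\mathbb{F}_{2^{n}}^{*}\setminus\{1\}$; this collection is shown to be a $(n,3,42)_{2}$ difference family by writing each of the $42$ quotient functions $\delta_{ij}(x)$, observing that $24$ of the equations $\delta_{ij}(x)=t$ are linear or pure-square (hence have a unique solution), and pairing the remaining $18$ quadratic equations so that in each pair exactly one is solvable --- this last step via the trace criterion of Lemma~\ref{equation} and the identity $Tr(1)=1$ for $n$ odd. A six-fold redundancy (the hexagon $x\leftrightarrow x+1\leftrightarrow 1/x\leftrightarrow\cdots$) then cuts $42$ down to $7$. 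No case split on $n\bmod 6$ is needed for the construction itself; the distinction only enters when analyzing simplicity. Your observation about $2$-dimensional subspace orbits is suggestive, but you would still need to say \emph{which} $3$-space to attach to each such orbit and why the resulting quotients balance --- and that is precisely what the $\langle 1,x,x^{2}\rangle$ choice and the trace-matching argument accomplish.
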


The above result was already obtained by Thomas \cite{T} in the
hypothesis that $\gcd (n,6)=1$. We first need to recall how the
solvability of a quadratic equation over $\mathbb{F}_{2^{n}}$ can be
established using the \emph{absolute trace} of $\mathbb{F}_{2^{n}}$.
This is the function $
\displaystyle
Tr: x\in \mathbb{F}_{2^{n}}\longrightarrow \sum _{i=0}^{n-1}x^{2^{i}}
\in \mathbb{F}_{2}$. Some elementary properties of this function which
could be useful later are the following:
\begin{itemize}%
	\item[] $Tr(x)+Tr(y)=Tr(x+y)$ for all $x, y \in \mathbb{F}_{2^{n}}$;
	\item[] $Tr(x^{2})=Tr(x)$ for all $x \in \mathbb{F}_{2^{n}}$;
	\item[] $Tr(1)=0$ or 1 according to whether $n$ is even or odd,
	respectively.
\end{itemize}

Here is the well known result concerning quadratic equations in a finite
field of characteristic two (see, e.g., \cite{H}).
%
\begin{lem}
	\label{equation}
	Let $ax^{2}+bx+c=0$ be a quadratic equation in $\mathbb{F}_{2^{n}}$ and
	let $m$ be the number of its distinct solutions in the same field. We
	have:
	\begin{itemize}%
		\item[] $m=1$ if and only if $b=0$;
		\item[] $m=2$ if and only if $b\neq 0$ and $Tr({ac\over b^{2}})=0$;
		\item[] $m=0$ if and only if $b\neq 0$ and $Tr({ac\over b^{2}})=1$.
	\end{itemize}
\end{lem}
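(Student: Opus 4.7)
The plan is to split by whether $b=0$, and in the case $b\neq 0$ to reduce the equation to an Artin--Schreier-type equation $y^{2}+y=d$, whose solvability over $\F_{2^{n}}$ is then controlled by the trace via a short dimension count.

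\emph{Case $b=0$.} Since the equation is quadratic we have $a\neq 0$, and the equation becomes $x^{2}=c/a$. The Frobenius map $x\mapsto x^{2}$ is a bijection of $\F_{2^{n}}$, so this has exactly one solution and $m=1$. Conversely, assume $b\neq 0$ and let $x_{0}$ be any solution. Then $x_{0}+b/a$ is also a solution: the cross-term contribution $b\cdot(b/a)=b^{2}/a$ equals the quadratic contribution $a(b/a)^{2}=b^{2}/a$, and in characteristic $2$ these cancel. Since $b/a\neq 0$ the two solutions are distinct, so $m\neq 1$. This settles the first equivalence.

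\emph{Case $b\neq 0$.} The substitution $x=(b/a)\,y$ is a bijection of $\F_{2^{n}}$, and after dividing the resulting equation by $b^{2}/a$ it becomes
\[
y^{2}+y+\frac{ac}{b^{2}}=0,
\]
that is, $y^{2}+y=d$ with $d=ac/b^{2}$. So it suffices to count solutions of $y^{2}+y=d$ in $\F_{2^{n}}$. Consider the map $T:\F_{2^{n}}\to\F_{2^{n}}$, $y\mapsto y^{2}+y$. It is $\F_{2}$-linear (the Frobenius is additive in characteristic $2$) with kernel $\{y:y^{2}=y\}=\F_{2}$, so $|\mathrm{Im}\,T|=2^{n-1}$. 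Moreover $Tr(T(y))=Tr(y^{2})+Tr(y)=2\,Tr(y)=0$ for every $y$, so $\mathrm{Im}\,T\subseteq\ker Tr$; since $Tr$ is a surjective $\F_{2}$-linear functional, $|\ker Tr|=2^{n-1}$, forcing $\mathrm{Im}\,T=\ker Tr$.

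Consequently, $y^{2}+y=d$ has a solution iff $Tr(d)=0$, and when it does the solution set is a coset of $\ker T=\F_{2}$, so it consists of exactly two elements $\{y_{0},y_{0}+1\}$. Transferring back through $x=(b/a)\,y$ gives $m=2$ when $Tr(ac/b^{2})=0$ and $m=0$ when $Tr(ac/b^{2})=1$. The only substantive step is the dimension count identifying $\mathrm{Im}\,T$ with $\ker Tr$; the rest is a direct substitution together with the $\F_{2}$-linearity and Frobenius-invariance of the trace.
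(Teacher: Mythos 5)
Your proof is correct and complete. The paper itself does not prove this lemma at all --- it states it as a well-known fact and cites Hirschfeld's book --- so there is no internal argument to compare against; your write-up supplies the standard self-contained justification. The two halves are both sound: the observation that for $b\neq 0$ any solution $x_{0}$ comes paired with the distinct solution $x_{0}+b/a$ correctly rules out $m=1$, and the reduction via $x=(b/a)y$ to the Artin--Schreier equation $y^{2}+y=ac/b^{2}$, followed by the dimension count $\mathrm{Im}\,T=\ker Tr$, is exactly the classical argument. The only step you assert without justification is the surjectivity of $Tr$; this is immediate since $Tr(y)=\sum_{i=0}^{n-1}y^{2^{i}}$ is a nonzero polynomial of degree $2^{n-1}$ and hence cannot vanish on all $2^{n}$ elements, but it would be worth one clause to say so.
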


The following fact is an immediate consequence of the above lemma.
%
\begin{lem}
	\label{trace}
	Let $ax^{2}+bx+c=0$ and $\alpha x^{2}+\beta x+\gamma =0$ be two
	quadratic equations in $\mathbb{F}_{2^{n}}$ with $b\beta \neq 0$.
	Exactly one of these equations is solvable in $\mathbb{F}_{2^{n}}$ if
	and only if
	$Tr({ac\over b^{2}})+Tr({\alpha \gamma \over \beta ^{2}})=1$.
\end{lem}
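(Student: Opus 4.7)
The plan is to derive this directly from Lemma \ref{equation}, since the hypothesis $b\beta\neq 0$ rules out the case $m=1$ for both equations and reduces everything to a trace computation in $\mathbb{F}_2$.

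More precisely, I would argue as follows. Because $b\neq 0$, Lemma \ref{equation} says that the first equation is solvable in $\mathbb{F}_{2^n}$ (i.e.\ $m=2$) exactly when $Tr(ac/b^2)=0$, and unsolvable (i.e.\ $m=0$) exactly when $Tr(ac/b^2)=1$. The same dichotomy, via $\beta\neq 0$, applies to the second equation with trace $Tr(\alpha\gamma/\beta^2)$. Encoding solvability by the element $Tr(ac/b^2)\in\mathbb{F}_2$ (with $0$ meaning ``solvable'' and $1$ meaning ``unsolvable''), and likewise for the second equation, the statement ``exactly one equation is solvable'' is the statement that the two associated trace values are distinct. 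In $\mathbb{F}_2$, two elements are distinct if and only if their sum is $1$, which gives the claimed equivalence
\[
Tr\!\left(\tfrac{ac}{b^{2}}\right)+Tr\!\left(\tfrac{\alpha\gamma}{\beta^{2}}\right)=1.
\]

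There is essentially no obstacle here: the content of the lemma is entirely absorbed by the previous one. The only thing worth being a little careful about is the role of the hypothesis $b\beta\neq 0$, which is precisely what excludes the degenerate case $m=1$ of Lemma \ref{equation} and makes the trivalued $m\in\{0,1,2\}$ collapse to a genuine $\mathbb{F}_2$-valued indicator of solvability. Once that is noted, the proof is a one-line remark rather than a computation.
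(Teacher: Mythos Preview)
Your argument is correct and matches the paper's approach: the paper itself gives no detailed proof but simply states that the lemma is ``an immediate consequence'' of Lemma~\ref{equation}, which is exactly what you spell out. Your remark about the role of the hypothesis $b\beta\neq 0$ is the only point worth making explicit, and you handle it correctly.
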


We are now ready to prove our main result.

\begin{thm}
	\label{main}
	There exists a $(n,3,7)_{2}$ difference family for every positive odd
	integer $n$.
\end{thm}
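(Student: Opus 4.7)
By Definition~\ref{DF_2}, producing an $(n,3,7)_2$ difference family amounts to exhibiting a multiset $\mathcal{F}$ of $3$-dimensional $\mathbb{F}_2$-subspaces $V \subseteq \mathbb{F}_{2^n}$ (each with the zero vector removed) such that the multiset of quotients $xy^{-1}$, taken over distinct $x,y$ in a common base block, covers every $h \in \mathbb{F}_{2^n}^{*} \setminus \{1\}$ exactly seven times. A quick count of the $42 = 7\cdot 6$ ordered ratios per block forces $|\mathcal{F}| = (2^n - 2)/6$, which is a positive integer precisely because $n$ is odd. My first step would be to recast the difference-family condition geometrically as
\[
\sum_{V \in \mathcal{F}} \bigl(|V \cap h^{-1}V| - 1\bigr) \;=\; 7 \qquad \text{for every } h \in \mathbb{F}_{2^n}^{*}\setminus\{1\},
\]
where each summand lies in $\{0,1,3,7\}$ according to $\dim(V \cap h^{-1}V) \in \{0,1,2,3\}$.

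Next I would separate the two regimes according to whether $3 \mid n$ or $\gcd(n,6) = 1$. When $3 \mid n$, the subfield $V_0 := \mathbb{F}_8 \subset \mathbb{F}_{2^n}$ furnishes a distinguished block whose multiplicative stabilizer is $\mathbb{F}_8^{*}$: on its own, $V_0$ contributes $7$ to every $h \in \mathbb{F}_8^{*}\setminus\{1\}$ and $0$ to every $h \notin \mathbb{F}_8^{*}$. Including $V_0$ in $\mathcal{F}$ therefore saturates the ``subfield ratios'' at one stroke (at the cost of making the eventual design non-simple, as every translate of $V_0$ occurs seven times in $\mathrm{dev}\,\mathcal{F}$), and reduces the task to covering $\mathbb{F}_{2^n}^{*}\setminus\mathbb{F}_8^{*}$ by $(2^n-8)/6$ further ``generic'' blocks with trivial stabilizer. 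When $\gcd(n,6)=1$ the subfield $\mathbb{F}_8$ is absent and the whole family must be generic, as in Thomas' original setting.

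For the generic blocks the plan is to work with a parametrized family, a natural candidate being $V_s = \langle 1, s, s^2\rangle_{\mathbb{F}_2}$; the oddness of $n$ forbids $\mathbb{F}_4 \subset \mathbb{F}_{2^n}$, so $V_s$ is genuinely $3$-dimensional for every $s \in \mathbb{F}_{2^n}\setminus\mathbb{F}_2$. For fixed $h \neq 1$, the condition that $h$ appear as a ratio $y'/y$ with $y, y' \in V_s$ unfolds, after expanding $y$ and $y'$ in the basis $\{1, s, s^2\}$, into a quadratic equation in $s$ over $\mathbb{F}_{2^n}$ whose coefficients depend polynomially on $h$. By Lemma~\ref{equation} its solvability is detected by the absolute trace of an explicit rational function of $h$. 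Lemma~\ref{trace} would then be the key combinatorial lever: it pairs two such quadratic equations with complementary trace criteria, forcing exactly one of each pair to be solvable and thereby producing a contribution that is uniform in $h$ and sums to the required $7$. The main obstacle will be to engineer the index set $S \subset \mathbb{F}_{2^n}$ so that the map $s \mapsto V_s$ is injective on $S$ with the correct cardinality $(2^n-2)/6$ (respectively $(2^n-8)/6$ when $3 \mid n$) \emph{and} so that the trace-pairing argument dovetails exactly with the subspace-intersection count; particular care is needed for the exceptional ratios $h \in \mathbb{F}_8^{*}$ when $3 \mid n$, whose entire $\lambda = 7$ budget must already be consumed by the subfield block $V_0$ and hence must receive no further contribution from any $V_s$.
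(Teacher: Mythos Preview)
Your plan is headed in exactly the same direction as the paper---the same parametrized family $V_s=\langle 1,s,s^2\rangle$, the same reduction of ``$h$ occurs as a ratio in $V_s$'' to quadratic equations in $s$, and the same appeal to Lemmas~\ref{equation}--\ref{trace}. What you are missing is the organizing idea that removes the obstacle you yourself flag (``engineer the index set $S$'') and makes the case split on $3\mid n$ disappear.

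The paper does \emph{not} try to pick the correct $(2^n-2)/6$ values of $s$ up front. Instead it throws in \emph{all} $2^n-2$ blocks $B_x$ with $x\in\mathbb{F}_{2^n}^*\setminus\{1\}$ and shows this gives a $(n,3,42)_2$ difference family: of the $42$ equations $\delta_{ij}(x)=t$, exactly $24$ are linear or of the form $ax^2+c=0$ (one solution each), and the remaining $18$ are genuine quadratics that pair off into $9$ matches with complementary trace invariants, so $m(t)=24+2\cdot 9=42$ uniformly in $t$. Only \emph{after} this uniform count does the paper observe the six-fold redundancy: the involutions $x\mapsto x+1$ and $x\mapsto 1/x$ generate hexagons $H_x$ on $\mathbb{F}_{2^n}^*\setminus\{1\}$, and the six blocks indexed by a hexagon lie in one $\mathbb{F}_{2^n}^*$-orbit (indeed $B_{x+1}=B_x$ and $B_{1/x}=x^{-2}B_x$), hence share the same quotient list. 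Picking one $x$ per hexagon divides $42$ by $6$ and yields the desired $\lambda=7$.

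This two-step ``overcount then quotient'' manoeuvre is the missing idea in your sketch. It also dissolves your worry about $h\in\mathbb{F}_8^*$ when $3\mid n$: those six values of $t$ are handled by the \emph{same} uniform count, and the hexagon whose vertices are $\mathbb{F}_8^*\setminus\{1\}$ simply contributes the block $\mathbb{F}_8^*$ once---no separate bookkeeping is needed. Your proposed case split and the special treatment of $V_0=\mathbb{F}_8$ are therefore detours; the paper's argument is uniform in odd $n$.
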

\begin{proof}
	We first associate with every $x\in \mathbb{F}_{2^{n}}^{*}\setminus
	\{1\}$ the subspace $S_{x}$ of $\mathbb{F}_{2}^{n}$ generated by $1$,
	$x$ and $x^{2}$. Note that these three elements are independent since,
	in the opposite case, we would have $x^{2}+x+1=0$ which implies
	$x^{3}=1$. This would mean that $x$ has order 3 in $\mathbb{F}_{2^{n}}
	^{*}$ so that $2^{n}-1$ would be divisible by 3 contradicting the
	hypothesis that $n$ is odd. Thus $S_{x}$ has dimension three. Now set
	$B_{x}:= S_{x}\setminus \{0\}$, hence
	%
	\begin{equation}
	\label{Bx}
	B_{x}=\{1,x,x^{2},x+1,x^{2}+1,x^{2}+x,x^{2}+x+1\}.
	\end{equation}
	Note that $B_{x}=B_{x+1}$ for every $x$. It is convenient, anyway, to
	consider $B_{x}$ and $B_{x+1}$ as distinct blocks. Now consider the
	collection
	\begin{equation*}
		\mathcal{F}:=\{B_{x} \ | \ x\in \mathbb{F}_{2^{n}}^{*}\setminus \{1\}
		\}\end{equation*}
	and, for any $t\in \mathbb{F}_{2^{n}}^{*}\setminus \{1\}$, let
	$m(t)$ be the multiplicity of $t$ in $\Delta {\mathcal{F}}$.
	
	Let $\delta _{ij}(x)$ be the $(i,j)$ entry in the following table
	\medskip
	\begin{center}
		\renewcommand\arraystretch{1.4}
		\begin{tabular}{ccccccc}
			\hline $-$ & ${1\over x}$ & ${1\over x^{2}}$ & ${1\over x+1}$ & ${1\over x^{2}+1}$ & ${1\over x^{2}+x}$ & ${1\over x^{2}+x+1}$\\
			$x$ & $-$ & ${1\over x}$ & ${x\over x+1}$ & ${x\over x^{2}+1}$ & ${1\over x+1}$ & ${x\over x^{2}+x+1}$ \\
			$x^{2}$ & $x$ & $-$ & ${x^{2}\over x+1}$ & ${x^{2}\over x^{2}+1}$ & ${x\over x+1}$ & ${x^{2}\over x^{2}+x+1}$ \\
			$x+1$ & ${x+1\over x}$ & ${x+1\over x^{2}}$ & $-$ & ${1\over x+1}$ & ${1\over x}$ & ${x+1\over x^{2}+x+1}$ \\
			$x^{2}+1$ & ${x^{2}+1\over x}$ & ${x^{2}+1\over x^{2}}$ & ${x+1}$ & $-$ & ${x+1\over x}$ & ${x^{2}+1\over x^{2}+x+1}$ \\
			$x^{2}+x$ & $x+1$ & ${x+1\over x}$ & $x$ & ${x\over x+1}$ & $-$ & ${x^{2}+x\over x^{2}+x+1}$ \\
			$x^{2}+x+1$ & ${x^{2}+x+1\over x}$ & ${x^{2}+x+1\over x^{2}}$ & ${x^{2}+x+1\over x+1}$ & ${x^{2}+x+1\over x^{2}+1}$ & ${x^{2}+x+1\over x^{2}+x}$ & $-$ \\
			\hline
		\end{tabular}
	\end{center}
	
	\medskip
	\noindent
	representing the list $\Delta B_{x}$ of quotients of $B_{x}$. More
	precisely, $\delta _{ij}(x)$ is the quotient between the $i$-th and the
	$j$-th element of $B_{x}$ in the ordering of (\ref{Bx}). For every
	$t\in \mathbb{F}_{2^{n}}^{*}\setminus \{1\}$, let $m_{ij}(t)$ be the
	number of distinct solutions in $\mathbb{F}_{2^{n}}$ of the equation
	\begin{equation*}
		E_{ij}(t): \ \delta _{ij}(x)=t
	\end{equation*}
	in the unknown $x$. It is clear that we have
	\begin{equation*}
		m(t)=\sum _{i\neq j}m_{ij}(t).
	\end{equation*}
	Note that $E_{ij}(t)$ can be rewritten as a quadratic equation
	$ax^{2}+bx+c=0$ with $b\neq 0$ for any pair $(i,j)$ belonging to the
	18-set
	\begin{multline*}
		I=\{(1,6),(1,7),(2,5),(2,7),(3,4),(3,7),(4,3),(4,7),(5,2),\\
		(5,7),(6,1),(6,7),(7,1),(7,2),(7,3),(7,4),(7,5),(7,6)\}.
	\end{multline*}

	Thus $m_{ij}(t)=0$ or 2 for every $(i,j)\in I$. On the other hand, it
	is easily seen that for all twenty-four pairs $(i,j)\notin I$ we have
	$m_{ij}(t)=1$ since in this case $E_{ij}(t)$ becomes either an equation
	of the first degree or an equation of the form $ax^{2}+c=0$. It follows
	that $m(t)=24+2\cdot r(t)$ where $r(t)$ is the number of equations
	$E_{ij}(t)$ with $(i,j)\in I$ which are solvable in
	$\mathbb{F}_{2^{n}}$. We want to prove that $r(t)=9$ for every $t$. For
	this, we have to show that it is possible to match the eighteen
	equations $E_{ij}(t)$ with $(i,j)\in I$ in such a way that, in each
	match, only one equation is solvable in $\mathbb{F}_{2^{n}}$. Using
	Lemma~\ref{trace} and taking into account the mentioned properties of
	the trace function, the reader can easily check that such a good
	matching is the following.
	
	\begin{center}
		\renewcommand\arraystretch{1.4}
		\begin{tabular}{ll}
			\hline $E_{61}(t): \ x^{2}+x+t=0$ & $E_{71}(t): \ x^{2}+x+t+1=0$ \\
			$E_{16}(t): \ tx^{2}+tx+1=0$ & $E_{17}(t): \ tx^{2}+tx+t+1=0$ \\
			$E_{52}(t): \ x^{2}+tx+1=0$ & $E_{37}(t): \ (t+1)x^{2}+tx+t=0$ \\
			$E_{72}(t): \ x^{2}+(t+1)x+1=0$ & $E_{27}(t): \ tx^{2}+(t+1)x+t=0$ \\
			$E_{43}(t): \ tx^{2}+x+1=0$ & $E_{73}(t): \ (t+1)x^{2}+x+1=0$ \\
			$E_{74}(t): \ x^{2}+(t+1)x+t+1=0$ & $E_{47}(t): \ tx^{2}+(t+1)x+t+1=0$ \\
			$E_{75}(t): \ (t+1)x^{2}+x+t+1=0$ & $E_{25}(t): \ tx^{2}+x+t=0$ \\
			$E_{76}(t):\ (t+1)x^{2}+(t+1)x+1=0$ & $E_{67}(t): \ (t+1)x^{2}+(t+1)x+t=0$ \\
			$E_{34}(t): \ x^{2}+tx+t=0$ & $E_{57}(t): \ (t+1)x^{2}+tx+t+1=0$ \\
			\hline
		\end{tabular}
	\end{center}

	\medskip
	Consider, as an example, the third of the above pairs $(E_{52}(t),E
	_{37}(t))$. By Lemma~\ref{equation}, $E_{52}(t)$ is solvable if and only
	if $Tr({1\over t^{2}})=0$, while $E_{37}(t)$ is solvable if and only if
	$Tr({t+1\over t})=0$. Now, by the properties of the trace function, we
	have:
	\begin{equation*}
		Tr\biggl ({1\over t^{2}}\biggl )+Tr\biggl ({t+1\over t}\biggl )=Tr\biggl (
		{1\over t}\biggl )+Tr\biggl ({t+1\over t}\biggl )=Tr(1)=1.
	\end{equation*}
	Hence, by Lemma~\ref{trace}, only one of the two equations $E_{52}(t)$
	and $E_{37}(t)$ is solvable in $\mathbb{F}_{2^{n}}$.
	
	We conclude that $m(t)=24+2\cdot 9=42$ for any $t\in \mathbb{F}_{2^{n}}
	^{*}\setminus \{1\}$. This means that $\mathcal{F}$ is a $(n,3,42)_{2}$
	difference family.
	
	Now consider the 2-regular graph $\Gamma $ with vertex-set $
	\mathbb{F}_{2^{n}}^{*}\setminus \{1\}$ where the two neighbors of any
	vertex $x$ are $x+1$ and ${1\over x}$. It is clear that the connected
	components of $\Gamma $ are all the hexagons of the form
	
	\begin{center}
		\begin{tikzpicture}[thick]
		\newdimen\R
		\R =2.7cm
		\foreach \x/\l in
		{ 60/\,, 120/\,, 180/\,, 240/\,, 300/\,, 360/\, }
		\draw[postaction={decorate}] ({\x-60}:\R) -- node[auto,swap]{\l} (\x:\R);
		\foreach \x/\l/\p in
		{ 60/{$\frac{1}{x+1}$}/above,
			120/{$x+1$}/above,
			180/{$H_x:=\quad\quad x$}/left,
			240/{$\frac{1}{x}$}/below,
			300/{$\frac{x+1}{x}$}/below,
			360/{$\frac{x}{x+1}$}/right
		}
		\node[inner sep=2pt,circle,draw,fill,label={\p:\l}] at (\x:\R) {};
		\end{tikzpicture}
	\end{center}
	
	We note that all blocks $B_{y}$ with $y$ lying in the hexagon
	$H_{x}$ are in the same $\mathbb{F}_{2^{n}}^{*}$-orbit. Indeed we
	already commented that $B_{x}$ and $B_{x+1}$ coincide. Also, the reader
	can easily check that $B_{1/x}={1\over x^{2}}\cdot B_{x}$. It follows
	that all six blocks associated with the vertices of any hexagon of
	$\Gamma $ produce the same list of quotients. Then, considering that
	$\mathcal{F}$ is a $(n,3,42)_{2}$ difference family, it is evident that
	if $X$ is a complete system of representatives for the hexagons of
	$\Gamma $, then $\mathcal{F}':=\{B_{x} \ | \ x\in X\}$ is a
	$(n,3,7)_{2}$ difference family. The assertion follows.
\end{proof}

In the following we will keep the same notation that we used in the
above proof. It is clear that the design constructed in the above
theorem does not depend on the system $X$ of representatives for the
hexagons of $\Gamma $. Recall in fact that $B_{x}=B_{x+1}$ and that
$B_{x}=x^{2}\cdot B_{1/x}$ so that the blocks associated with the
vertices of $H_{x}$ have all the same development.

When $n\equiv \pm 1$ (mod 6), that is the case also considered by
Thomas, our design coincides with his design. Indeed our blocks are
exactly what he calls \emph{special triangles}. The two descriptions are
different since while Thomas' approach is essentially geometric, our
approach is purely algebraic.

Now, given $x\in \mathbb{F}_{2^{n}}^{*}\setminus \{1\}$, we want to show
that a block $B_{y}$ of the $(n,3,7)_{2}$ difference family
$\mathcal{F}$ is in the same $\mathbb{F}_{2^{n}}^{*}$-orbit of
$B_{x}$ if and only if $y$ is in $V(H_{x})$, the set of vertices of the
hexagon $H_{x}$. The ``if-part'' has been already shown in the proof of
Theorem~\ref{main}. Let us prove the ``only-if-part''. Assume that
$B_{y}$ is in the same $\mathbb{F}_{2^{n}}^{*}$-orbit of $B_{x}$ so that
there exists a non-zero field element $t$ such that $B_{y}=tB_{x}$. Such
equality implies that $
\begin{cases}
1=tf_{0}
\cr
y=tf_{1}
\cr
y^{2}=tf_{2}
\end{cases}
$ with $(f_{0},f_{1},f_{2})$ a triple of distinct elements of
$B_{x}$. In its turn the above system implies that $f_{0}f_{2}+f_{1}
^{2}=0$. Considering the form of the elements of $B_{x}$, we see that
\begin{equation*}
	f_{0}f_{2}+f_{1}^{2}=c_{0}+c_{1}x+c_{2}x^{2}+c_{3}x^{3}+c_{4}x^{4}
\end{equation*}
for a suitable quintuple $(c_{0},\dots ,c_{4})$ of elements of
$\mathbb{F}_{2}$, namely $x$ is a zero of the polynomial $p(z)=\sum
_{i=0}^{4} c_{i}z^{i}\in \mathbb{F}_{2}[z]$. First note that
$p(z)$ is the null polynomial -- namely we have $c_{i}=0$ for each
$i$ -- exactly when $(f_{0},f_{1},f_{2})$ and $y$ are as follows:
\begin{center}
	\renewcommand\arraystretch{1.4}
	\begin{tabular}{ccccccc}
		\hline $f_{0}$ & $1$ & $x^{2}$ & $1$ & $x^{2}+1$ & $x^{2}$ & $x^{2}+1$\\
		$f_{1}$ & $x$ & $x$ & $x+1$ & ${x+1}$ & $x^{2}+x$ & $x^{2}+x$ \\
		$f_{2}$ & $x^{2}$ & $1$ & $x^{2}+1$ & $1$ & $x^{2}+1$ & $x^{2}$ \\
		$y$ & $x$ & ${1\over x}$ & $x+1$ & ${1\over x+1}$ & ${x+1\over x}$ & ${x\over x+1}$ \\
		\hline
	\end{tabular}
\end{center}

So we see that in this case $y$ is a vertex of $H_{x}$.

Now assume that $p(z)$ has degree $d$ with $1\leq d\leq 4$. In this case
the zeros of $p(z)$ lying in $\mathbb{F}_{2^{n}}$ are in the subfield
of order $2^{\gcd (n,d)}$. Considering that $n$ is odd we have either
$\gcd (n,d)=1$ or $\gcd (n,d)=3$. In the first case $x$ should lie in
the subfield of order 2, i.e., $x\in \{0,1\}$ which is absurd. In the
second case $x$ would be in the subfield $\mathbb{K}$ of order 8 and
consequently both $B_{x}$ and $V(H_{x})$ coincide with $\mathbb{K}
^{*}\setminus \{1\}$. It immediately follows that $y$ is also in
$\mathbb{K}$ and then $y\in V(H_{x})$.

It is clear that the stabilizer of any $B_{x}$ is a common divisor of
$2^{n}-1$ and $|B_{x}|=7$. Thus it is always trivial when $n\equiv
\pm 1$ (mod 6). Instead, for $n\equiv 3$ (mod 6), $B_{x}$ has
non-trivial stabilizer if and only if $B_{x}$ is the multiplicative
group of the subfield $\mathbb{K}$ of order 8.

The above considerations, together with Remark~\ref{simple}, allow us
to state the following.
%
\begin{rem}
	The cyclic $(n,3,7)_{2}$ design constructed in Theorem~\ref{main} is
	simple if and only if $n\equiv \pm 1$ (mod 6).
\end{rem}

When $n\equiv 3$ (mod 6), that is the case not considered by Thomas,
$\mathbb{F}_{2^{n}}$ has a subfield $\mathbb{K}$ of order $8$ and we
already commented that for every $x\in \mathbb{K}^{*}$ the block
$B_{x}$ coincides with $\mathbb{K}^{*}$ (which is also the vertex-set
of $H_{x}$). Thus, if $y$ is the representative of $X$ in $\mathbb{K}
^{*}$, then $\mathcal{F}'$ is a $(2^{n}-1,7,7)$ difference family in
$\mathbb{F}_{2^{n}}^{*}$ with a base block $B_{y}$ that is a subgroup
of $\mathbb{F}_{2^{n}}^{*}$. It follows, by Proposition~\ref{mk,k,k},
that $\mathcal{F}'':=\mathcal{F}'\setminus \{B_{y}\}$ is a
$(n,3,3,7)_{2}$ difference family and then, by Proposition~\ref{RDF_2->GDD_2}, we can state the following.

\begin{thm}
	There exists a cyclic and simple $(n,3,3,7)_{2}$ group divisible design
	for every integer $n\equiv 3$ \textup{(}mod $6$\textup{)}.
\end{thm}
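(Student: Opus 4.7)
The plan is to reuse the machinery of Theorem~\ref{main} directly, adapting its output to the relative setting. Since $n\equiv 3\pmod 6$ forces $3\mid n$, the field $\mathbb{F}_{2^n}$ contains a subfield $\mathbb{K}$ of order $8$. I would first choose the system $X$ of hexagon representatives appearing in the proof of Theorem~\ref{main} so that the hexagon entirely contained in $\mathbb{K}^{*}$ is represented by some $y\in \mathbb{K}^{*}\setminus\{1\}$; for such a $y$, the subspace $S_y=\langle 1,y,y^2\rangle$ is $3$-dimensional over $\mathbb{F}_2$ and contained in $\mathbb{K}$, hence equals $\mathbb{K}$, so $B_y=\mathbb{K}^{*}$ is a subgroup of $\mathbb{F}_{2^n}^{*}$ of order $7$.

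With this choice, Theorem~\ref{main} tells us that $\mathcal{F}'=\{B_x : x\in X\}$ is a $(2^n-1,7,7)$ ordinary difference family in $\mathbb{F}_{2^n}^{*}$ containing the subgroup $B_y$ as a base block. Proposition~\ref{mk,k,k} then immediately yields that $\mathcal{F}'' := \mathcal{F}'\setminus\{B_y\}$ is a $(2^n-1,7,7,7)$ difference family relative to $\mathbb{K}^{*}$. Each remaining base block $B_x$ still has the form $S_x\setminus\{0\}$ with $S_x$ a $3$-dimensional subspace of $\mathbb{F}_2^n$, so by Definition~\ref{RDF_2} $\mathcal{F}''$ is a $(n,3,3,7)_2$ difference family, and Proposition~\ref{RDF_2->GDD_2} produces the desired cyclic $(n,3,3,7)_2$-GDD.

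For simplicity I would invoke the criterion recorded after Theorem~\ref{RDF->GDD}: one must verify that each base block of $\mathcal{F}''$ has trivial stabilizer and that different base blocks lie in different $\mathbb{F}_{2^n}^{*}$-orbits. The stabilizer of any $B_x$ divides $\gcd(2^n-1,7)=7$, and the orbit/stabilizer analysis carried out in the text just after Theorem~\ref{main} (via the polynomial $p(z)=\sum_i c_i z^i$ and the hexagon graph $\Gamma$) shows that (i) $B_x$ and $B_{x'}$ share an $\mathbb{F}_{2^n}^{*}$-orbit if and only if $x'\in V(H_x)$, and (ii) the only $B_x$ with non-trivial stabilizer is precisely the subfield block $\mathbb{K}^{*}$, which is exactly the one we discarded. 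Thus distinct hexagon representatives in $X\setminus\{y\}$ give distinct orbits, and every surviving base block has trivial stabilizer, so the GDD is simple.

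The present theorem is essentially an assembly of already-proved facts; the only genuinely non-trivial ingredient is the identification, done before the statement, that the unique ``bad'' (non-trivial stabilizer) base block is the subfield block $\mathbb{K}^{*}$. Once this is in hand, the Proposition~\ref{mk,k,k}/Proposition~\ref{RDF_2->GDD_2} pipeline does all the remaining work with no further calculation.
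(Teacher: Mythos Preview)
Your proposal is correct and follows essentially the same route as the paper: remove the subfield block $\mathbb{K}^{*}$ from the $(n,3,7)_2$ difference family of Theorem~\ref{main}, apply Proposition~\ref{mk,k,k} and Proposition~\ref{RDF_2->GDD_2}, and deduce simplicity from the orbit/stabilizer analysis carried out immediately after Theorem~\ref{main}. There is no substantive difference between your argument and the paper's.
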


As far as we know this the first infinite family of cyclic GDDs over a
finite field.

\section*{Acknowledgements}
This work has been performed under the auspices
of the G.N.S.A.G.A. of the C.N.R. (National Research
Council) of Italy.
The first author carried out this research within the project  
``Disegni combinatori con un alto grado di simmetria", supported by Fondo Ricerca di Base, 2015, of Universit\`a degli Studi di Perugia. The second author is supported in part by the Croatian Science Foundation under the project 6732.

\end{document}